\documentclass[a4paper,12pt,reqno]{amsart}
\usepackage{amsfonts}
\usepackage{amsmath}
\usepackage{amssymb}
\usepackage[a4paper]{geometry}
\usepackage{mathrsfs}
\usepackage{csquotes}
\usepackage{mathrsfs}

\usepackage[colorlinks]{hyperref}
\renewcommand\eqref[1]{(\ref{#1})} 
%
%
\setlength{\textwidth}{15.2cm}
\setlength{\textheight}{22.7cm}
\setlength{\topmargin}{0mm}
\setlength{\oddsidemargin}{3mm}
\setlength{\evensidemargin}{3mm}
\setlength{\footskip}{1cm}


\numberwithin{equation}{section}
\theoremstyle{plain}
\newtheorem{thm}{Theorem}[section]
\newtheorem{prop}[thm]{Proposition}

\theoremstyle{definition}

\def\e[#1]{{\textrm{e}}^{#1}}



\begin{document}

   \title[On $(\lambda,\mu)$-classes on the Engel group]
 {On $(\lambda,\mu)$-classes on the Engel group}

\author[M. Chatzakou]{Marianna Chatzakou}
\address{
  Marianna Chatzakou:
  \endgraf
  Department of Mathematics
  \endgraf
  Imperial College London
  \endgraf
  180 Queen's Gate, London SW7 2AZ
  \endgraf
  United Kingdom
  \endgraf
  {\it E-mail address} {\rm m.chatzakou16@imperial.ac.uk}
  }

     \begin{abstract}
     The purpose of this note is to compare the properties of the symbolic pseudo-differential calculus on the Heisenberg and on the Engel groups; nilpotent Lie groups of 2-step and 3-step, respectively. Here we provide a preliminary analysis of the structure and of the symbolic calculus with symbols parametrized by $(\lambda,\mu)$ on the Engel group,  while for the case of the Heisenberg group we recall the analogous results on the $\lambda$-classes of symbols. 
     \end{abstract}
     \maketitle

\section{Introduction}
\label{SEC:intro}
In \cite{FR16} the authors developed a global pseudo-differential calculus in the setting of a graded nilpotent Lie groups. Here we present the analogous preliminary results in the particular case of the Engel group $\mathcal{B}_4$.\\
\indent We prove that the representation of $\mathcal{B}_4$ is associated with the Kohn-Nirenberg quantization on $\mathbb{R}^4$. This, together with the analogue of the Kohn-Nirenberg quantization on Lie groups (c.f \cite{Tayl84},\cite{RT10},\cite{FR16}) gives rise to the development of the pseudo-differential calculus on $\mathcal{B}_4$ with scalar-valued symbols depending on the parameters $(\lambda,\mu)$-the co-adjoint orbits. \\
\indent In \cite{Tayl84}, M. Taylor describes a way one can develop a symbolic non-invariant calculus by defining a general quantization and the general symbols on any type-I Lie group, and explained his ideas in the setting of the Heisenberg group $\mathbb{H}_n$, with symbols defined by some asymptotic expansions. Particularising in the setting of a large class of nilpotent Lie groups; namely on the class of graded Lie groups, to the best of our knowledge, the development of a non-invariant calculus with scalar-valued symbols has been restricted to the case of the Heisenberg group (graded group of 2-step), see \cite{BFKG12}, or \cite{FR14}.\\
\indent Besides the amount of work devoted to the case of the Heisenberg group, the same motivating aspects appear as well on any graded Lie group. In our consideration of $\mathcal{B}_4$ (graded group of 3-step) our approach differs from the one in \cite{Tayl84} or in \cite{BFKG12} in the sense that the symbols are operator valued. However, using the link between the Kohn-Nirenberg quantization and the representations on $\mathcal{B}_4$ they can be expressed on the euclidean level. Concrete formulas for the difference operators in the setting of $\mathcal{B}_4$ are provided, laying down the necessary foundation for the characterisation of the symbol classes in our setting.

   \section{Prelimaries on the Engel group $\mathcal{B}_4$ and its Lie algebra}
   We start by fixing the notation required for presenting our results. The map 
   \begin{eqnarray*}
\lefteqn{(x_1,x_2,x_3,x_4) \circ(y_1,y_2,y_3,y_4)}\\
&:=& (x_1+y_1,x_2+y_2,x_3+y_3-x_1y_2,x_4+y_4+\frac{1}{2}x_1^2y_2-x_1y_3)
\end{eqnarray*}
endows $\mathbb{R}^4$ with a structure of a Lie group, and we shall refer to $\mathcal{B}_4=(\mathbb{R}^4,\circ)$ as the \textit{Engel group}. The Lie algebra of $\mathcal{B}_4$, say $\mathfrak{l}_4$, is, by the general theory, the vector space of (smooth) left invariant vector fields $X$ on $\mathbb{R}^4$ characterised by the property 
\[
(XI)(x)=\mathcal{J}_{\tau_x}(0) \cdot (XI)(0)\,,\forall x \in \mathcal{B}_4\,,
\] 
where $I$ is the identity map on $\mathbb{R}^4$ and $\mathcal{J}_{\tau_x}(0)$ denotes the Jacobian matrix at the origin of the map $\tau_x$ for $x \in \mathcal{B}_4$ where  $\tau_{x}(y):= x \circ y$ is the left translation by $x$ on $\mathcal{B}_4$. In particular we have
\[
\mathcal{J}_{\tau_x}(0)=\begin{pmatrix}
    1       & 0 & 0  & 0 \\
    0      & 1 & 0 & 0 \\
    0 & -x_1 & 1 & 0 \\
    0      & \frac{x_{1}^{2}}{2} & -x_1 & 1
\end{pmatrix}\,,
\]
so that for example for $X_1=\partial_{x_1}$, we can recognise that for every $x \in \mathcal{B}_4$,
\[
(X_1 I)(x)=\begin{pmatrix}
1 \\ 0 \\ 0 \\ 0
\end{pmatrix}=\begin{pmatrix}
    1       & 0 & 0  & 0 \\
    0      & 1 & 0 & 0 \\
    0 & -x_1 & 1 & 0 \\
    0      & \frac{x_{1}^{2}}{2} & -x_1 & 1
\end{pmatrix}\cdot \begin{pmatrix}
1 \\ 0 \\ 0 \\ 0
\end{pmatrix}=\mathcal{J}_{\tau_x}(0) \cdot (X_1 I)(0)\,,
\]
while similarly for the vector fields $X_2=\partial_{x_2}-x_1\partial_{x_3}+\frac{x_{1}^{2}}{2}\partial_{x_4}$, $X_3=\partial_{x_3}-x_1\partial_{x_4}$ and $X_4=\partial_{x_4}$. Simple calculations show that $[X_1,X_2]=X_3$, and $[X_1,X_3]=X_4$ \footnote{For smooth vectors $X$, $Y$ in $\mathbb{R}^n$, we define the Lie-bracket $[X,Y]:=YX-XY$.}, are the only non-zero relations, so that 
\[
\mathfrak{l}_4=span\{X_1,X_2, [X_1,X_2],[X_1,X_3] \}\,,
\]
and $X_1,X_2$ satisfy the so-called \textit{H\"{o}rmander condition}. The lower center series of $\mathfrak{l}_4$ defined inductively by 
\[
\mathfrak{l}_{4(1)}:=\mathfrak{l}_4\,,\quad \mathfrak{l}_{4(j)}=[\mathfrak{l}_4,\mathfrak{l}_{4(j-1)}]\footnote{For $V,W$ spaces of vector fields, we denote by $[V,W]$ the set $\{[v,w]:v \in V, w \in W\}$. }\,,
\]
terminates at $0$ after $3$ steps, that is $\mathcal{B}_4$ is of 3-step. In addition, $\mathcal{B}_4$ is a homogeneous Lie group on $\mathbb{R}^4$ since the mapping
\[
\delta_{\lambda}: \mathbb{R}^4 \rightarrow \mathbb{R}^4\,,\quad \delta_{\lambda}(x_1,x_2,x_3,x_4)=(\lambda x_1,\lambda x_2,\lambda^2 x_3,\lambda^3 x_4)\,,
\]
is an automorphism of $\mathcal{B}_4$ for every $\lambda>0$, and so the natural gradation of its Lie algebra $\mathfrak{l}_4$ appears as
\[
\mathfrak{l}_4=V_1\oplus V_2 \oplus V_3\,,
\]
where $V_1=span\{X_1,X_2\}\,,V_2=span\{X_3\}$ and $V_3=span\{X_4\}$, and is such that $[V_i,V_j] \subset V_{i+j}$, $i \neq j$. \\

Finally, let us note that, from the general theory of homogeneous Lie groups, the Lebesgue measure on $\mathbb{R}^4$ is invariant with respect to the left and right invariant translation on $\mathcal{B}_4$, that is the Lebesgue measure on $\mathbb{R}^4$ is the Haar measure for $\mathcal{B}_4$ and we can formulate as 
\[
\int_{\mathcal{B}_4}\cdots dx_1 dx_2 dx_3 dx_4=\int_{\mathbb{R}^4}\cdots dx_1 dx_2 dx_3 dx_4\,.
\]
   \section{Group representation and quantization of the Fourier transform}
  
   The representations of the Engel group $\mathcal{B}_4$ are the infinite dimensional unitary (equivalence classes of) representations of $\mathcal{B}_4$. Parametrised by $\lambda \neq 0$ and $\mu \in \mathbb{R}$, following \cite[p.333]{Dix57}, they act on $L^2(\mathbb{R}^n)$. We denote them by $\pi_{\lambda,\mu}$, and realise them as 
   \[
   \pi_{\lambda,\mu}(x_1,x_2,x_3,x_4)h(u)= exp \left( i \left(-\frac{\mu}{2 \lambda}x_2+\lambda x_4-\lambda x_3 u+\frac{\lambda}{2}x_2 u^2 \right) \right)h(u+x_1)\,,
   \] 
   for $h \in L^2(\mathbb{R}),u \in \mathbb{R}$. The group Fourier transform of a function $f \in L^1(\mathcal{B}_4)$ is by definition the linear endomorphism on $L^2(\mathbb{R})$
   \[
   \mathcal{F}_{\mathcal{B}_4}(f)(\pi_{\lambda,\mu}) \equiv \hat{f}(\pi_{\lambda,\mu})\equiv \pi_{\lambda,\mu}(f):= \int_{\mathcal{B}_4}f(x)\pi_{\lambda,\mu}(x)^{*}\,dx\,.
   \]
   Rigorous computations show that  $\hat{f}(\pi_{\lambda,\mu})h(u)$ can be written as 
\begin{align}
\label{fourier,tansform}
\lefteqn{ \int_{\mathbb{R}^4} \bigg[ f(x_1,x_2,x_3,x_4)}\nonumber\\
& \cdot \exp \left( i \left( \frac{\mu}{2 \lambda}x_2 - \lambda x_4 +\lambda x_3 (u-x_1)-\frac{\lambda}{2}x_2 (u-x_1)^{2} \right) \right) h(u-x_1)\bigg] dx_1\,dx_2\,dx_3\,dx_4\tag{1}\\
& = (2 \pi)^{-2}\int_{\mathbb{R}^4} \int_{\mathbb{R}^4} \bigg[ \mathcal{F}_{\mathbb{R}^4}(f) (\xi, \eta, \tau, \omega) \cdot e^{ix_1 \xi}\cdot e^{i x_2 \eta}\cdot e^{i x_3 \tau}\cdot e^{i x_4 \omega} \nonumber\\
& \cdot \exp \left( i \left( \frac{\mu}{2 \lambda}x_2 - \lambda x_4 +\lambda x_3 (u-x_1)-\frac{\lambda}{2}x_2 (u-x_1)^{2} \right) \right) \nonumber\\ & \cdot h(u-x_1)\bigg] dx_1\,dx_2\,dx_3\,dx_4 d\xi\,d\eta\,d\tau\,d\omega \nonumber \\
&= -(2\pi) \int_{\mathbb{R}}\int_{\mathbb{R}} \bigg[ e^{i x_1 \xi} \mathcal{F}(f)_{\mathbb{R}^4} ( \xi, \frac{\lambda}{2}(u-x_1)^2-\frac{\mu}{2 \lambda}, \lambda(x_1-u), \lambda)h(u-x_1) \bigg] dx_1\,d\xi\nonumber\\
&=(2 \pi) \int_{\mathbb{R}} \int_{\mathbb{R}}\bigg[ e^{i(u-v)\xi} \mathcal{F}(f)_{\mathbb{R}^4}(\xi,  \frac{\lambda}{2}v^2-\frac{\mu}{2 \lambda}, -\lambda v, \lambda)h(v)\bigg] dv\,d\xi\nonumber\,,
\end{align}    
   for $h \in L^2(\mathbb{R})$ and $u \in \mathbb{R}$, that is
   \begin{equation}\tag{2}\label{g.f.t}
   \mathcal{F}_{\mathcal{B}_4}(f)(\pi_{\lambda,\mu})=Op[a_{f,\lambda,\mu}(\cdot,\cdot)]\,,
   \end{equation}
   where \[a_{f,\lambda,\mu}(v,\xi)=(2\pi)^2\mathcal{F}_{\mathbb{R}^4}(f)(\xi,\frac{\lambda}{2}v^2-\frac{\mu}{2\lambda},-\lambda v,\lambda)\,.\]
  
 Here the Fourier transform $\mathcal{F}_{\mathbb{R}^4}$ is defined via:
   \begin{equation}
   \mathcal{F}_{\mathbb{R}^4}f(\xi)=(2\pi)^{-2}\int_{\mathbb{R}^4} f(x)e^{-ix\xi}\,dx \quad (\xi \in \mathbb{R}^4, f \in L^{1}(\mathbb{R}^4))\,,
   \end{equation}
   and $Op$ denotes the Kohn-Nirenberg quantization, that is for a smooth symbol $a$ on $\mathbb{R} \times \mathbb{R}$ the operator 
   \[
   Op(a)f(u)=(2\pi)^{-1}\int_{\mathbb{R}}\int_{\mathbb{R}} e^{i(u-v)\xi}a(v,\xi)f(v)\,dv\,d\xi\,,
   \]
   for $f \in \mathcal{S}(\mathbb{R})$ and $u \in \mathbb{R}$. \\
   
We note that for the case of the Heisenberg group $\mathbb{H}_n$ the group Fourier transform has been computed in \cite{FR14} as being the operator 
\begin{equation}\tag{3}\label{gfthei}
\mathcal{F}_{\mathbb{H}_n}(f)(\pi_{\lambda}) =(2\pi)^{\frac{n}{2}}Op^{W}[\mathcal{F}_{\mathbb{R}^{2n+1}}(f)(\sqrt{|\lambda|}\cdot,\sqrt{\lambda}\cdot,\lambda)]\,,
\end{equation}
where $Op^{W}$ denotes the Weyl-quantization, i.e.
\[
Op^{W}(a)f(u)=(2\pi)^{-n}\int_{\mathbb{R}^n}\int_{\mathbb{R}^n}e^{i(u-v)\xi}a\left(\xi,\frac{u+v}{2}\right)f(v)\,dv\,d\xi\,,
\] 
for $f\in \mathcal{S}(\mathbb{R}^n)$ and $u \in \mathbb{R}^n$, where $\pi_{\lambda}$ denotes the Schr\"{o}dinger representations of $\mathbb{H}_n$,  \\

   Going back to our case, of one keeps the same notation $\pi_{\lambda,\mu}$ for the infinitesimal representation, we compute that:
   \begin{flushleft}
   $\quad \pi_{\lambda,\mu}(X_1)=\partial_{u}=Op(i\xi)\,,$\\
    $\quad \pi_{\lambda,\mu}(X_2)=\frac{i}{2}\left(\lambda u^2-\frac{\mu}{\lambda}\right)=Op\left( \frac{i\lambda u^2}{2}-\frac{i\mu}{2 \lambda}\right)\,,$  \\
       $\quad\pi_{\lambda,\mu}(X_3)=-i\lambda u=Op(-i\lambda u)\,,$\\
       $\quad  \pi_{\lambda,\mu}(X_4)=i \lambda=Op(i \lambda)\,, $
   \end{flushleft}
   thus 
   \[
   \pi_{\lambda,\mu}(\mathcal{L})=\pi_{\lambda,\mu}(X_1)^2+\pi_{\lambda,\mu}(X_2)^2=\frac{
   d^2}{du^2}-\frac{1}{4}\left(\lambda u^2-\frac{
   \mu}{\lambda}\right)^2=-Op \left(\xi^2+\frac{1}{4}\left(\lambda u^2-\frac{\mu}{\lambda}\right)^2 \right)\,.
   \]
   With our choice of notation, the Plancherel measure of the Engel group $\mathcal{B}_4$ is $(2^{-3}\pi^{-4}) d\lambda\,d\mu$, in the sense that following expression for the Plancherel formula
   \begin{equation}\tag{4}\label{Planch.}
   \int_{\mathcal{B}_4} |f(x_1,x_2,x_3,x_4)|^2\,dx_1\,dx_2\,dx_3\,dx_4=2^{-3}\pi^{-4} \int_{\lambda \neq 0}\int_{\mu \in \mathbb{R}}\|\pi_{\lambda,\mu}(f)\|_{\textrm{HS}}^{2}\,d\mu\, d\lambda\,,
   \end{equation}
   holds for any $f \in \mathcal{S}(\mathbb{R})$, where $\|\cdot\|_{\textrm{HS}}$ denotes the Hilbert-Schmidt norm of an operator on $L^2$, that is $\|A\|_{\textrm{HS}}:=Tr(A^{*}A)$. The last allows for an extension of the group Fourier transform to $L^2(\mathcal{B}_4)$, and in particular formula \eqref{Planch.} holds true for any $f \in L^2(\mathcal{B}_4)$. \\
   
   Indeed, by using \eqref{g.f.t} the operator $\pi_{\lambda,\mu}(f)$ has integral kernel 
   \[
   \mathcal{K}_{f,\lambda,\mu}(u,v)=2\pi \int_{\mathbb{R}}e^{i(u-v)\xi} \mathcal{F}_{\mathbb{R}^4}(f)(\xi,\frac{\lambda}{2}v^2-\frac{\mu}{2\lambda},-\lambda v, \lambda)\,d\xi\,,
   \] 
   or equivalently
   \[
   \mathcal{K}_{f,\lambda,\mu}(u,v)=(2\pi)^{\frac{3}{2}}\mathcal{F}_{\mathbb{R}^3}(f)(v-u,\frac{\lambda}{2}v^2-\frac{\mu}{2\lambda},-\lambda v, \lambda)\,,
   \] 
   where the Fourier transform is taken with respect to the second, the third and the fourth variable of $f$. Integrating the $L^2(\mathbb{R}\times \mathbb{R})$-norm of $\mathcal{K}_{f,\lambda,\mu}$ (or the Hilbert-Schmidt norm of $\pi_{\lambda,\mu}(f)$) against $d\lambda,d\mu$ we obtain 
   \begin{eqnarray*}
\lefteqn{\int_{\mathbb{R}\setminus \{0\}} \int_{\mathbb{R}}\int_{\mathbb{R}} \int_{\mathbb{R}} \vert \mathcal{K}_{f,\lambda,\mu}(u,v) \vert ^2\, du\,dv\,d\mu\,d\lambda}\\
&=& (2\pi)^3 \int_{\mathbb{R}\setminus \{0\}} \int_{\mathbb{R}} \int_{\mathbb{R}} \int_{\mathbb{R}} \vert  \mathcal{F}_{\mathbb{R}^{3}}(f)(u-v,\frac{\lambda}{2}v^2-\frac{\mu}{2\lambda},-2\lambda,\lambda)\vert^2\,du\,dv\,d\lambda\,d\mu\\
&=& (2\pi)^3\int_{\mathbb{R}\setminus \{0\}} \int_{\mathbb{R}} \int_{\mathbb{R}} \int_{\mathbb{R}} \vert  \mathcal{F}_{\mathbb{R}^{3}}(f)(x_1,w_2,w_3,w_4)  \vert^2\, \frac{1}{2}\,dw_2\,dw_3\,dw_4\,dx_1\,,
\end{eqnarray*}
where the constant $\frac{1}{2}$ comes from the calculation of the determinant of the Jacobian matrix of the linear transformation $F(u,v,\lambda, \mu)=(w_1=u-v,w_2=\frac{\lambda}{2}v^2-\frac{\mu}{2\lambda},w_3=-\lambda v,w_4=\lambda)$. Finally, the Plancherel formula on $\mathbb{R}^3$ in the variable $(w_2,w_3,w_4)$ with dual variable $(x_2,x_3,x_4)$ gives 
\[
\int_{\mathbb{R}\setminus \{0\}} \int_{\mathbb{R}^3}|\mathcal{K}_{f,\lambda,\mu}(u,v)|^2\,dv\,du\,d\mu\,d\lambda=2^2 \pi^3 \int_{\mathbb{R}^4}|f(x_1,x_2,x_3,x_4)|^2\,dx_1\,dx_2\,dx_3\,dx_4\,, 
\] 
and the last implies \eqref{Planch.}.
\section{Difference operators}
\textit{Difference operators} on the setting of a compact Lie group introduced in \cite{RT10} as acting on Fourier coefficients, while on graded Lie groups in \cite{FR16}. In the setting of the Engel group $\mathcal{B}_4$ this yields the definition of the difference operators $\Delta_{x_i}$ as:
\[
\Delta_{x_i}\hat{\kappa}(\pi_{\lambda,\mu}):=\pi_{\lambda,\mu}(x_i \kappa)\,,\quad i=1,\cdots,4\,,
\]
for suitable distributions $\kappa$ on $\mathcal{B}_4$.\\

\indent To find the explicit expressions of the difference operators $\Delta_{x_i}$ we make use of the following property: For $X$ and $\tilde{X}$ being a left and a right invariant vector field, respectively, in the Lie algebra $\mathfrak{l}_4$, and for a distribution $\kappa$ on $\mathcal{B}_4$ we have 
\[
\pi_{\lambda,\mu}(X\kappa)=\pi_{\lambda,\mu}(X)\pi_{\lambda,\mu}(\kappa)\,,\quad \pi_{\lambda,\mu}(\tilde{X}\kappa)=\pi_{\lambda,\mu}(\kappa)\pi_{\lambda,\mu}(X)\,.
\] Notice that the right invariant vector fields that generate $\mathfrak{l}_4$ can be calculated as:
\[
\tilde{X}_1=\partial_{x_1}-x_2\partial_{x_3}-x_3\partial_{x_4}\,,\tilde{X}_2=\partial_{x_2}\,,\tilde{X}_3=\partial_{x_3}\,,\tilde{X}_4=\partial_{x_4}\,.
\]  
\begin{prop}\label{first.dif.op}
For suitable distribution $\kappa$ on $\mathcal{B}_4$ we have:
\[
\Delta_{x_1}\hat{\kappa}(\pi_{\lambda,\mu})=\frac{i}{\lambda}(\pi_{\lambda,\mu}(X_3)\pi_{\lambda,\mu}(\kappa)-\pi_{\lambda,\mu}(\kappa)\pi_{\lambda,\mu}(X_3))\,,
\]
where $\pi_{\lambda,\mu}(X_3)=-i\lambda u$, and 
\[
\Delta_{x_2} \hat{\kappa}(\pi_{\lambda,\mu})=\frac{2\lambda}{i}\partial_{\mu}\pi_{\lambda,\mu}(\kappa)\,.
\]
\end{prop}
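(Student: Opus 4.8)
The plan is to handle the two identities by separate mechanisms: the $x_1$-difference operator through the left/right intertwining property recalled above, and the $x_2$-difference operator by differentiating the integral kernel in $\mu$. Throughout I would take $\kappa$ in a convenient dense class (say $\mathcal S(\mathcal B_4)$), so that multiplication by the coordinate polynomials, differentiation under the integral sign, and the vector-field intertwiners are all legitimate; the stated identities then pass to suitable distributions by density and duality.

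For $\Delta_{x_1}$, I would first rewrite the commutator on the right-hand side as a single group Fourier transform. Using that $X_3$ is left invariant and that $\tilde X_3=\partial_{x_3}$ is the right invariant field attached to the same Lie-algebra element, the intertwining property gives $\pi_{\lambda,\mu}(X_3)\pi_{\lambda,\mu}(\kappa)=\pi_{\lambda,\mu}(X_3\kappa)$ and $\pi_{\lambda,\mu}(\kappa)\pi_{\lambda,\mu}(X_3)=\pi_{\lambda,\mu}(\tilde X_3\kappa)$, hence
\[
\pi_{\lambda,\mu}(X_3)\pi_{\lambda,\mu}(\kappa)-\pi_{\lambda,\mu}(\kappa)\pi_{\lambda,\mu}(X_3)=\pi_{\lambda,\mu}\big((X_3-\tilde X_3)\kappa\big)\,.
\]
The key algebraic observation is that $X_3-\tilde X_3=(\partial_{x_3}-x_1\partial_{x_4})-\partial_{x_3}=-x_1\partial_{x_4}$, so the bracket equals $\pi_{\lambda,\mu}(-x_1\partial_{x_4}\kappa)$. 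Since $\partial_{x_4}=X_4$ commutes with multiplication by $x_1$ and satisfies $\pi_{\lambda,\mu}(X_4)=i\lambda$, I would then recover the multiplication operator through
\[
\pi_{\lambda,\mu}(-x_1\partial_{x_4}\kappa)=-\pi_{\lambda,\mu}\big(\partial_{x_4}(x_1\kappa)\big)=-\pi_{\lambda,\mu}(X_4)\,\pi_{\lambda,\mu}(x_1\kappa)=-i\lambda\,\pi_{\lambda,\mu}(x_1\kappa)\,.
\]
As $\lambda\neq 0$, dividing by $-i\lambda$ (and using $1/(-i)=i$) delivers $\Delta_{x_1}\hat\kappa(\pi_{\lambda,\mu})=\pi_{\lambda,\mu}(x_1\kappa)$ in the asserted form.

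For $\Delta_{x_2}$, I would argue directly from the explicit integral representation \eqref{fourier,tansform} of $\pi_{\lambda,\mu}(\kappa)$. There $\mu$ appears only in the phase factor $\exp\!\big(i\tfrac{\mu}{2\lambda}x_2\big)$; differentiating under the integral sign brings down $i\tfrac{x_2}{2\lambda}$, giving
\[
\partial_\mu\,\pi_{\lambda,\mu}(\kappa)=\frac{i}{2\lambda}\,\pi_{\lambda,\mu}(x_2\kappa)=\frac{i}{2\lambda}\,\Delta_{x_2}\hat\kappa(\pi_{\lambda,\mu})\,,
\]
and solving for $\Delta_{x_2}\hat\kappa(\pi_{\lambda,\mu})$ produces the factor $2\lambda/i$.

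I do not expect a deep obstacle here: the substance of the first identity is the single observation $X_3-\tilde X_3=-x_1\partial_{x_4}$ together with the invertibility of $\pi_{\lambda,\mu}(X_4)=i\lambda$, while the second is a one-line differentiation. The only point requiring care is the analytic justification of the formal steps — differentiation under the integral sign in $\mu$ and the interchange of multiplication by $x_i$ with the intertwiners — which for Schwartz $\kappa$ is supplied by dominated convergence thanks to the rapid decay of $\kappa$ and the smoothness of the phases in $\mu$.
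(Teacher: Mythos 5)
Your proof is correct and follows essentially the same route as the paper's: the first identity rests on the relation $\tilde X_3-X_3=x_1\partial_{x_4}=X_4x_1$ combined with the left/right intertwining property and $\pi_{\lambda,\mu}(X_4)=i\lambda$, and the second on differentiating the explicit integral formula for $\pi_{\lambda,\mu}(\kappa)$ in $\mu$. The only cosmetic difference is that you run the first computation from the commutator toward $\pi_{\lambda,\mu}(x_1\kappa)$ rather than in the reverse direction as the paper does.
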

\begin{proof}
Since $\pi_{\lambda,\mu}(X_4)=i \lambda$, and $\tilde{X}_3-X_3=X_4x_1$, we have 
\begin{align*}
\pi_{\lambda,\mu}(x_1 \kappa)&= \frac{1}{i \lambda}\pi_{\lambda,\mu}(X_4x_1 \kappa)=\frac{1}{i \lambda}((\tilde{X}_3-X_3)\kappa)\\
&=\frac{i}{\lambda}(\pi_{\lambda,\mu}(X_3)\pi_{\lambda,\mu}(\kappa)-\pi_{\lambda,\mu}(\kappa)\pi_{\lambda,\mu}(X_3))\,.
\end{align*}
Now, for the difference operator corresponding to $x_2$, we differentiate the group Fourier transform of $\kappa$ as in \eqref{fourier,tansform} at $h$ with respect to $\mu$ and get 
\begin{align*}
\partial_{\mu} \{ \pi_{\lambda, \mu}(\kappa)h(u)\}&=\partial_{\mu} \Big\{\int_{\mathbb{R}^4}\kappa(x)\exp\left(i\left(\frac{\mu}{2\lambda}x_2-\lambda x_4 \right)\right)\\
	& \cdot \exp\left(i\left(\lambda x_3 (u-x_1)-\frac{\lambda}{2}x_2 (u-x_1)^2\right)\right)h(u-x_1)dx \Big\}\\
	&=\int_{\mathbb{R}^4}\kappa(x)\exp\left(i\left(\frac{\mu}{2\lambda}x_2-\lambda x_4 \right)\right)\\
	& \cdot \exp\left(i\left(\lambda x_3 (u-x_1)-\frac{\lambda}{2}x_2 (u-x_1)^2\right)\right)h(u-x_1)\left( \frac{i}{2\lambda} x_2 \right)dx\,,
\end{align*}
or in terms of difference operators,
		\[
			\partial_{\mu} \pi_{\lambda, \mu}(\kappa)=\pi_{\lambda, \mu} \left(\frac{i}{2\lambda}x_2 \kappa \right)=\frac{i}{2 \lambda} \Delta_{x_2}\pi_{\lambda, \mu}(\kappa)\,.
		\]
\end{proof}
\begin{prop}\label{sec.dif.op}
For a suitable distribution $\kappa$ we have:
\[
\Delta_{x_3}\hat{\kappa}(\pi_{\lambda,\mu})=\frac{i}{\lambda}(\Delta_{x_2}\pi_{\lambda,\mu}(\kappa) \pi_{\lambda,\mu}(X_3)+\pi_{\lambda,\mu}(\kappa)\pi_{\lambda,\mu}(X_1)-\pi_{\lambda,\mu}(X_1)\pi_{\lambda,\mu}(\kappa))\,,
\]
where $\Delta_{x_2|\pi_{\lambda,\mu}}$ is given in Proposition \ref{first.dif.op} and $\pi_{\lambda,\mu}(X_1)=\partial_{u}$, $\pi_{\lambda,\mu}(X_3)=-i\lambda u$.
\end{prop}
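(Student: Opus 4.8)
The plan is to mirror the argument used for $\Delta_{x_1}$ in Proposition \ref{first.dif.op}: exploit that $\pi_{\lambda,\mu}(X_4)=i\lambda$ is an invertible scalar in order to trade the multiplication by $x_3$ for an application of the left-invariant field $X_4$. Concretely, since $\partial_{x_4}$ commutes with multiplication by $x_3$, one has the identity $X_4(x_3\kappa)=x_3 X_4\kappa$ of distributions on $\mathcal{B}_4$, and applying the group Fourier transform together with $\pi_{\lambda,\mu}(X_4\,\cdot)=\pi_{\lambda,\mu}(X_4)\pi_{\lambda,\mu}(\cdot)=i\lambda\,\pi_{\lambda,\mu}(\cdot)$ gives
\[
\pi_{\lambda,\mu}(x_3\kappa)=\frac{1}{i\lambda}\pi_{\lambda,\mu}\big(x_3 X_4\kappa\big)=\frac{1}{i\lambda}\pi_{\lambda,\mu}\big(x_3\partial_{x_4}\kappa\big)\,.
\]
Thus everything reduces to expressing the operator $x_3\partial_{x_4}$ through left- and right-invariant vector fields.

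The key step is the operator identity read off from the explicit formulas for $X_1$ and $\tilde{X}_1$, namely $\tilde{X}_1-X_1=-x_2\partial_{x_3}-x_3\partial_{x_4}$, which I would solve for the term of interest:
\[
x_3\partial_{x_4}=X_1-\tilde{X}_1-x_2\partial_{x_3}\,.
\]
Here I would further observe that, since $\partial_{x_3}=\tilde{X}_3$ commutes with multiplication by $x_2$, the last term rewrites as $x_2\partial_{x_3}\kappa=\tilde{X}_3(x_2\kappa)$, which is precisely the configuration that converts a multiplication by $x_2$ into the already-computed difference operator $\Delta_{x_2}$ once the Fourier transform is applied.

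Finally I would apply $\pi_{\lambda,\mu}$ to $(X_1-\tilde{X}_1-x_2\partial_{x_3})\kappa$ term by term, using the two intertwining rules $\pi_{\lambda,\mu}(X\kappa)=\pi_{\lambda,\mu}(X)\pi_{\lambda,\mu}(\kappa)$ and $\pi_{\lambda,\mu}(\tilde{X}\kappa)=\pi_{\lambda,\mu}(\kappa)\pi_{\lambda,\mu}(X)$: the first two terms produce the contribution $\pi_{\lambda,\mu}(X_1)\pi_{\lambda,\mu}(\kappa)-\pi_{\lambda,\mu}(\kappa)\pi_{\lambda,\mu}(X_1)$, while the third yields $\pi_{\lambda,\mu}\big(\tilde{X}_3(x_2\kappa)\big)=\pi_{\lambda,\mu}(x_2\kappa)\pi_{\lambda,\mu}(X_3)=\Delta_{x_2}\pi_{\lambda,\mu}(\kappa)\,\pi_{\lambda,\mu}(X_3)$. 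Collecting the three contributions, dividing by $i\lambda$, and using $1/i=-i$ to rearrange the signs produces exactly the claimed formula, with $\pi_{\lambda,\mu}(X_1)=\partial_u$ and $\pi_{\lambda,\mu}(X_3)=-i\lambda u$ as recorded earlier. The only point requiring care is the bookkeeping of which multiplication operators commute with which vector fields—$\partial_{x_4}$ with $x_3$, and $\partial_{x_3}$ with $x_2$—so that the passage from the group side to the representation side is legitimate; no analytic difficulty beyond this arises, the whole argument being algebraic given the explicit expressions for the left- and right-invariant fields.
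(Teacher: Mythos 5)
Your proof is correct and follows essentially the same route as the paper: both invert $\pi_{\lambda,\mu}(X_4)=i\lambda$ to trade $x_3\kappa$ for $x_3\partial_{x_4}\kappa$, use the operator identity $x_3\partial_{x_4}=X_1-\tilde{X}_1-x_2\partial_{x_3}$, and convert the last term into $\Delta_{x_2}\pi_{\lambda,\mu}(\kappa)\,\pi_{\lambda,\mu}(X_3)$ via the right-invariant intertwining rule. Your careful identification of the third term as $x_2\tilde{X}_3$ (rather than $x_2X_3$) even fixes a small typo in the paper's stated identity, where the computation itself indeed uses $\tilde{X}_3=\partial_{x_3}$.
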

\begin{proof}
Since $X_1-\tilde{X}_1-x_2X_3= \partial_{x_4}x_3$ we have 
\begin{align*}
\pi_{\lambda,\mu}(x_3 \kappa)&=\frac{1}{i \lambda}(X_4x_3 \kappa)=\frac{1}{i \lambda}((X_1-\tilde{X}_1-x_2\tilde{X}_3)\kappa)\\
&=\frac{1}{i \lambda}(\pi_{\lambda,\mu}(X_1)\pi_{\lambda,\mu}(\kappa)-\pi_{\lambda,\mu}(\kappa)\pi_{\lambda,\mu}(X_1)-\Delta_{x_2}\pi_{\lambda,\mu}(\kappa)\pi_{\lambda,\mu}(X_3)\\
&= \frac{i}{\lambda}(\Delta_{x_2}\pi_{\lambda,\mu}(\kappa)\pi_{\lambda,\mu}(X_3)+\pi_{\lambda,\mu}(\kappa)\pi_{\lambda,\mu}(X_1)-\pi_{\lambda,\mu}(X_1)\pi_{\lambda,\mu}(\kappa))\,,
\end{align*}
completing the proof. 
\end{proof}
\begin{prop}
For a suitable distribution $\kappa$ on $\mathcal{B}_4$ we have:
	\begin{align*}
	(\Delta_{x_4} \pi_{\lambda, \mu}(\kappa))h(u)&= i\partial_{\lambda}\{\pi_{\lambda, \mu}(\kappa)h(u)\} -\left(\frac{\mu}{2 \lambda^2}+\frac{u^2}{2} \right)\Big \{\Delta_{x_2}\pi_{\lambda, \mu}(\kappa)h(u) \Big\}\\
	&+u\Big \{ \Delta_{x_3}\pi_{\lambda, \mu}(\kappa)h(u)\Big \}-\Big \{ \Delta_{x_3}\Delta_{x_1}\pi_{\lambda, \mu}(\kappa)h(u)\Big\}\\
	&+u\Big \{ \Delta_{x_2}\Delta_{x_1}\pi_{\lambda, \mu}(\kappa)h(u) \Big \}-\frac{1}{2}\Big\{\Delta_{x_2}\Delta_{x_1}^{2}\pi_{\lambda, \mu}(\kappa)h(u)\Big\}\,,
	\end{align*}
	where the difference operators $\Delta_{x_i|\pi_{\lambda, \mu}}$, $i=1,2,3$, are given in Propositions \ref{first.dif.op} and \ref{sec.dif.op}, respectively.
\end{prop}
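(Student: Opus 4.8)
The plan is to follow the same template as the proofs of Propositions \ref{first.dif.op} and \ref{sec.dif.op}, but with one essential change: since $x_4$ lies at the top of the gradation, it cannot be produced by a first-order algebraic identity between left- and right-invariant vector fields (as $x_1$ and $x_3$ were, via $\tilde X_3-X_3=X_4x_1$ and $X_1-\tilde X_1-x_2\tilde X_3=X_4x_3$). Instead, the natural operation that pulls down a factor of $x_4$ is differentiation in the dual variable $\lambda$, since $x_4$ enters the oscillatory factor in \eqref{fourier,tansform} only through the term $-\lambda x_4$. So I would differentiate the integral representation of $\pi_{\lambda,\mu}(\kappa)$ in $\lambda$, identify the $x_4$-contribution, and solve for it.

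Concretely, writing $\Phi=\Phi(x,u)=i\big(\tfrac{\mu}{2\lambda}x_2-\lambda x_4+\lambda x_3(u-x_1)-\tfrac{\lambda}{2}x_2(u-x_1)^2\big)$ for the phase in \eqref{fourier,tansform}, so that $\pi_{\lambda,\mu}(\kappa)h(u)=\int_{\mathbb R^4}\kappa(x)e^{\Phi}h(u-x_1)\,dx$, the only $\lambda$-dependence sits in the phase, and differentiating under the integral sign gives $\partial_\lambda\{\pi_{\lambda,\mu}(\kappa)h(u)\}=\int_{\mathbb R^4}\kappa(x)e^{\Phi}(\partial_\lambda\Phi)h(u-x_1)\,dx$, where
\[
\partial_\lambda\Phi=i\Big(-\tfrac{\mu}{2\lambda^2}x_2-x_4+x_3(u-x_1)-\tfrac12 x_2(u-x_1)^2\Big).
\]
The term $-ix_4$ contributes exactly $-i\,\pi_{\lambda,\mu}(x_4\kappa)=-i\,\Delta_{x_4}\hat\kappa$, so once the remaining terms are identified, isolating this term and multiplying through by $-1/i=i$ will yield the claimed formula.

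The only step requiring care is the bookkeeping: each remaining monomial in $x$ must be read as a composition of difference operators, using the defining relation $\Delta_{x_i}\hat\kappa=\pi_{\lambda,\mu}(x_i\kappa)$, while keeping strictly apart the external variable $u$ (which multiplies the output function and stays outside) from the internal factors $x_1,x_2,x_3$ (each of which becomes a $\Delta_{x_i}$). Thus I would expand $(u-x_1)$ and $(u-x_1)^2=u^2-2ux_1+x_1^2$ and match: the terms $-\tfrac{\mu}{2\lambda^2}x_2$ and $-\tfrac12 x_2u^2$ combine into the coefficient $-\big(\tfrac{\mu}{2\lambda^2}+\tfrac{u^2}{2}\big)\Delta_{x_2}\hat\kappa$; the term $u\,x_3$ gives $u\,\Delta_{x_3}\hat\kappa$ and $-x_1x_3$ gives $-\Delta_{x_3}\Delta_{x_1}\hat\kappa$; and from $-\tfrac12 x_2(u-x_1)^2$ the cross and quadratic pieces give $+u\,\Delta_{x_2}\Delta_{x_1}\hat\kappa$ and $-\tfrac12\Delta_{x_2}\Delta_{x_1}^2\hat\kappa$. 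Here I use freely that the $\Delta_{x_i}$ commute, being multiplications by coordinate functions on the group side, so the order of composition is immaterial. Collecting these contributions and adding $i\,\partial_\lambda\{\pi_{\lambda,\mu}(\kappa)h(u)\}$ reproduces the stated identity.

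The main (and only) obstacle is the meticulous tracking of signs and the clean separation of $u$-multiplications from $\Delta_{x_1}$-operators in the expansion of $\partial_\lambda\Phi$; there is no genuine analytic difficulty. The differentiation under the integral sign is legitimate for suitable $\kappa$ (for instance $\kappa\in\mathcal S(\mathcal{B}_4)$), since the rapid decay makes the integral in \eqref{fourier,tansform} absolutely convergent together with all its $\lambda$-derivatives, which also guarantees that every intermediate expression $\pi_{\lambda,\mu}(x^\alpha\kappa)$ is well defined.
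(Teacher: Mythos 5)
Your argument is exactly the paper's: differentiate the integral representation \eqref{fourier,tansform} in $\lambda$ under the integral sign, read off each monomial of $\partial_\lambda\Phi$ (after expanding $(u-x_1)$ and $(u-x_1)^2$) as a composition of difference operators while keeping the external multiplication by $u$ separate, and solve for the $-x_4$ term. The bookkeeping you describe matches the paper's computation term by term, so the proposal is correct and follows essentially the same route.
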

\begin{proof}
Differentiating the group Fourier transform of $\kappa$ as in \eqref{fourier,tansform} at $h$ with respect to $\lambda$ yields
\begin{align*}
	\partial_{\lambda}\{\pi_{\lambda, \mu}(\kappa)h(u)\}&= \partial_{\lambda} \Big \{ \int_{\mathbb{R}^4} \kappa(x) \exp \left( i \left( \frac{\mu}{2 \lambda}x_2 -\lambda x_4\right) \right) \\
	&\cdot \exp \left(i \left(\lambda x_3(u-x_1)-\frac{\lambda}{2}x_2(u-x_1)^2 \right) \right)h(u-x_1)\,dx \Big\}\\
	&=\int_{\mathbb{R}^4}\kappa(x) \exp \left( i \left( \frac{\mu}{2 \lambda}x_2 -\lambda x_4+\lambda x_3(u-x_1)-\frac{\lambda}{2}x_2(u-x_1)^2 \right) \right)\\
	& h(u-x_1) \Big\{i\left(-\frac{\mu}{2 \lambda^2}x_2-x_4+x_3(u-x_1)-\frac{x_2}{2}(u-x_1)^2 \right) \Big\} dx\,.
	 	\end{align*}
	Rewriting the above formula in terms of difference operators we obtain
	\begin{align*}
	\partial_{\lambda}\{\pi_{\lambda, \mu}(\kappa)h(u)\}&= i \Big[ -\left(\frac{\mu}{2 \lambda^2}+\frac{u^2}{2} \right)\Big \{\Delta_{x_2}\pi_{\lambda, \mu}(\kappa)h(u) \Big\}-\Big \{\Delta_{x_4}\pi_{\lambda, \mu}(\kappa)h(u)\Big\}\\
	&+u\Big \{ \Delta_{x_3}\pi_{\lambda, \mu}(\kappa)h(u)\Big \}-\Big \{ \Delta_{x_3}\Delta_{x_1}\pi_{\lambda, \mu}(\kappa)h(u)\Big\}\\
	&+u\Big \{ \Delta_{x_2}\Delta_{x_1}\pi_{\lambda, \mu}(\kappa)h(u) \Big \}-\frac{1}{2}\Big\{\Delta_{x_2}\Delta_{x_1}^{2}\pi_{\lambda, \mu}(\kappa)h(u)\Big\}\Big]\,,
	\end{align*}
completing the proof.	

\end{proof}
\noindent For example we have:
\begin{flushleft}
$  \quad \Delta_{x_1}\pi_{\lambda,\mu}(X_1)=-I\,, \Delta_{x_1}\pi_{\lambda,\mu}(X_2)=\Delta_{x_1}\pi_{\lambda,\mu}(X_3)=\Delta_{x_1}\pi_{\lambda,\mu}(X_4)=0$\\
$  \quad \Delta_{x_2}\pi_{\lambda,\mu}(X_1)=\Delta_{x_2}\pi_{\lambda,\mu}(X_3)=\Delta_{x_2}\pi_{\lambda,\mu}(X_4)=0\,, \Delta_{x_2}\pi_{\lambda,\mu}(X_2)=-\lambda I$\\
$ \quad \Delta_{x_3}\pi_{\lambda,\mu}(X_1)=\Delta_{x_3}\pi_{\lambda,\mu}(X_4)=0\,,\Delta_{x_3}\pi_{\lambda,\mu}(X_2)=-\lambda u +u\,, \Delta_{x_3}\pi_{\lambda,\mu}(X_3)=-I$\\
$  \quad \Delta_{x_4}\pi_{\lambda,\mu}(X_1)=\Delta_{x_4}\pi_{\lambda,\mu}(X_4)=0\,, \Delta_{x_4}\pi_{\lambda,\mu}(X_2)=\frac{u^2}{2}(1-\lambda)+\frac{\mu}{2\lambda}\,, \Delta_{x_4}\pi_{\lambda,\mu}(X_4)=-I$,
\end{flushleft}  
where the difference operators $\Delta_{x_i}\pi_{\lambda,\mu}(X_j)$ can be understood as the group Fourier transform of the distribution $x_i X_j \delta_0$.
\section{Quantization and symbol classes}
In this note, we may slightly change the notation of the symbol introduced in \cite{FR16}. We keep the notation
\[
x=(x_1,x_2,x_3,x_4) \in \mathcal{B}_4\,,
\] to denote the coordinates of an element in the Engel group $\mathcal{B}_4$, and we may denote by 
\[
\sigma(x,\lambda,\mu):=\sigma(x,\pi_{\lambda,\mu})\,,\quad (x, \lambda, \mu) \in \mathcal{B}_4 \times \mathbb{R}\setminus \{0\} \times \mathbb{R}\,,
\]
the symbol $\sigma$ parametrised by $(x, \lambda,\mu)$.  In addition, if the multi-index $\alpha \in \mathbb{N}^{4}_{0}$ is written as
\[
\alpha=(\alpha_1,\alpha_2,\alpha_3,\alpha_4)\,,\quad \alpha_i \in \mathbb{N}_{0}\,,
\] 
then the homogeneous degree of $\alpha$ is given by:
\[
[\alpha]=\alpha_1+\alpha_2+2\alpha_3+3\alpha_4\,.
\]
For each $\alpha$ we may write:
\[
x^{\alpha}=x_1^{\alpha_1}x_2^{\alpha_2}x_3^{\alpha_3}x_4^{\alpha_4}\,,
\] 
so that the corresponding difference operator can be defined as:
\[
{\Delta^{'}}^{\alpha}=\Delta_{x_1}^{\alpha_1}\Delta_{x_2}^{\alpha_2} \Delta_{x_3}^{\alpha_3}\Delta_{x_4}^{\alpha_4}\,.
\]
Finally for the vector field $X$ we write $X^{\alpha}$ to denote the following composition of vector fields:
\[
X_{1}^{\alpha_1}X_{2}^{\alpha_2}X_{3}^{\alpha_3}X_{4}^{\alpha_4}\,.
\]
Following \cite{FR16} we define the symbol classes $S_{\rho,\delta}^{m}(\mathcal{B}_4)$, where $0 \leq \delta <\rho \leq 1$ and $m \in \mathbb{R}$, as the set of symbols $\sigma$ for which the following quantities are finite:
\[
\|\sigma\|_{S^{m}_{\rho,\delta},a,b,c}:= \sup_{\lambda \in \mathbb{R}\setminus \{0\}, \mu \in \mathbb{R},x \in \mathcal{B}_4}\|\sigma(x,\lambda,\mu)\|_{S^{m}_{\rho,\delta},a,b,c}\,,\quad a,b,c \in \mathbb{N}_{0}\,,
\]
where 
\[
\|\sigma(x,\lambda,\mu)\|_{S^{m}_{\rho,\delta},a,b,c}:= \sup_{\substack{[a] \leq a\\ [\beta] \leq b,|\gamma| \leq c}} \|\pi_{\lambda,\mu}(I-\mathcal{L})^{\frac{\rho [\alpha]-m-\delta[\beta]+\gamma}{2}}X^{\beta}{\Delta^{'}}^{\alpha}\sigma(x,\lambda,\mu)\pi_{\lambda,\mu}(I-\mathcal{L})^{-\frac{\gamma}{2}}\|_{op}\,.
\]

\indent There is a natural quantization on any type-I Lie group introduced by \cite{Tayl84} that can be served as the analogue of the Kohn-Nirenberg quantization on $\mathbb{R}^n$. In particular, the \textit{quantization}, i.e., the mapping $\sigma\mapsto Op(\sigma)$ produces operators associated with a symbol $\sigma$ (for example in the class of symbols $S_{\rho,\delta}^{m}(\mathcal{B}_4)$) on $\mathcal{S}(\mathcal{B}_4)$ given by:
\begin{equation}\tag{5}\label{psido}
Op(\sigma)\phi(x)=2^{-3}\pi^{-4}\int_{\lambda \neq 0}\int_{\mu \in \mathbb{R}} Tr \left(\pi_{\lambda,\mu}(x)\sigma(x,\lambda,\mu)\pi_{\lambda,\mu}(\phi)\right)\,d\mu\,d\lambda\,.
\end{equation}
Here we have used our notation for the description of the dual, as well as for the symbol and the Plancherel measure, see \eqref{Planch.}.\\

Let us note that by \eqref{g.f.t}, we see that for the symbol $\sigma$ quantized as:
\[
\sigma(x,\lambda,\mu)=Op(a_{\kappa_x,\lambda,\mu})\,,
\]
then its symbol that is given by
\[
a_{\kappa_x,\lambda,\mu}(v,\xi)=(2\pi)^2\mathcal{F}_{\mathbb{R}^4}(\kappa_x)(\xi,\frac{\lambda}{2}v^2-\frac{\mu}{2\lambda},-\lambda v, \lambda)\,,
\]
shall be called the $(\lambda,\mu)$-symbol,
where $\{\kappa_{x}(y)\}$ is the kernel of the symbol $\{\sigma(x,\lambda,\mu)\}$, i.e.,
\[
\sigma(x,\lambda,\mu)=\pi_{\lambda,\mu}(\kappa_x)\,.
\]
The above, together with the property of the Fourier transform 
\[
\hat{\phi}(\pi_{\lambda,\mu})\pi_{\lambda,\mu}(x)=\mathcal{F}_{\mathcal{B}_4}(\phi(x\cdot))(\pi_{\lambda,\mu})\,,
\]
 and the properties of the trace yield the following alternative formula for the quantization given in \eqref{psido}:
 \[
 Op(\sigma)(\phi)(x)=2^{-3}\pi^{-4}\int_{\lambda \neq 0}\int_{\mu \in \mathbb{R}} Tr \left( Op(a_{\kappa_x,\lambda,\mu})Op(a_{\phi(x\cdot),\lambda,\mu}) \right)\,d\mu\,d\lambda\,.
 \]
The last formula shows that the quantization formula \eqref{psido} can be expressed in terms of composition of quantization of symbols in the Euclidean space.\\

 Similarly, for the case of the Heisenberg group $\mathbb{H}_n$, \eqref{gfthei} implies that the operator $Op(\sigma)$ on $\mathcal{S}(\mathbb{H}_n)$ involves 'Euclidean objects', and in particular:
\[
Op(\sigma)\phi(x)=c_n \int_{\lambda \neq 0} Tr \left(Op^{W}(a_{x,\lambda})Op^{W} [\mathcal{F}_{\mathbb{R}^{2n+1}}(\phi(x\cdot))(\sqrt{|\lambda|}\cdot,\sqrt{\lambda}\cdot,\lambda)] \right)|\lambda|^{n}\,d\lambda\,,
\]
where the symbol $a_{x,\lambda}$ (also called the $\lambda$-symbol) given by:
\[
a_{x,\lambda}(\xi,u)=\mathcal{F}_{\mathbb{R}^{2n+1}}(\kappa^{'}_{x})(\sqrt{|\lambda|}\xi,\sqrt{\lambda}u,\lambda)\,,
\]
where $\{\kappa^{'}_{x}(y)\}$ is the kernel of the symbol $\sigma$,
 is such that 
\[
\sigma(x,\lambda):=\sigma(x,\pi_{\lambda})=Op^{W}(a_{x,\lambda})\,.
\]
For our notation, especially for the Plancherel measure $c_n|\lambda|^n$ on $\mathbb{H}_n$, see \cite[Chapter 6]{FR16}.\\

In contrast with the case of the Engel group $\mathcal{B}_4$, in the setting of the Heisenberg group $\mathbb{H}_n$, one can renormalise $a_{x,\lambda}$ as
\[
a_{x,\lambda}(\xi,u):=\tilde{a}_{x,\lambda}(\sqrt{|\lambda|}\xi,\sqrt{\lambda}u)\,,
\]
and therefore, one can characterise the symbol classes $S_{\rho,\delta}^{m}(\mathbb{H}_n)$ by the property that these $\lambda$-symbols belong to some Shubin spaces, called $\lambda$-type version of the usual Shubin classes, leading to sufficient criteria for ellipticity and hypoellipticity  of operators on $\mathbb{H}_n$ in terms of the invertibility properties of their $\lambda$-symbols, see \cite[Chapter 6]{FR16}.

\section*{Acknowledgement}
I would like to thank Professor Michael Ruzhansky for introducing me to this topic and for comments leading to improvements of the current work.

\end{document}